\documentclass{amsart}
\usepackage{hyperref}
\usepackage{amssymb}
\usepackage{ifthen}
\usepackage{graphicx}

\newtheorem{thm}{Theorem}
\newtheorem{cor}{Corollary}
\newtheorem{lem}{Lemma}

\newtheorem{rem}{Remark}

\newtheorem{conj}{Conjecture}
\theoremstyle{definition}
\newtheorem{example}[equation]{Example}
\newtheorem{prob}[equation]{Problem}

\newcommand{\A}{{\mathcal A}}

\newcommand{\U}{{\mathcal U}}
\newcommand{\es}{{\mathcal S}}

\newcommand{\D}{{\mathbb D}}




\def\be{\begin{equation}}
\def\ee{\end{equation}}

\newcommand{\bee}{\begin{enumerate}}
\newcommand{\eee}{\end{enumerate}}

\newcommand{\blem}{\begin{lem}}
\newcommand{\elem}{\end{lem}}
\newcommand{\bthm}{\begin{thm}}
\newcommand{\ethm}{\end{thm}}
\newcommand{\bcor}{\begin{cor}}
\newcommand{\ecor}{\end{cor}}
\newcommand{\beg}{\begin{example}}
\newcommand{\eeg}{\end{example}}
\newcommand{\begs}{\begin{examples}}
\newcommand{\eegs}{\end{examples}}
\newcommand{\bdefe}{\begin{defin}}
\newcommand{\edefe}{\end{defin}}
\newcommand{\bprob}{\begin{prob}}
\newcommand{\eprob}{\end{prob}}
\newcommand{\bei}{\begin{itemize}}
\newcommand{\eei}{\end{itemize}}

\newcommand{\bcon}{\begin{conj}}
\newcommand{\econ}{\end{conj}}
\newcommand{\bcons}{\begin{conjs}}
\newcommand{\econs}{\end{conjs}}
\newcommand{\bprop}{\begin{propo}}
\newcommand{\eprop}{\end{propo}}
\newcommand{\br}{\begin{rem}}
\newcommand{\er}{\end{rem}}
\newcommand{\brs}{\begin{rems}}
\newcommand{\ers}{\end{rems}}
\newcommand{\bo}{\begin{obser}}
\newcommand{\eo}{\end{obser}}
\newcommand{\bos}{\begin{obsers}}
\newcommand{\eos}{\end{obsers}}
\newcommand{\bpf}{\begin{pf}}
\newcommand{\epf}{\end{pf}}
\newcommand{\ba}{\begin{array}}
\newcommand{\ea}{\end{array}}
\newcommand{\beq}{\begin{eqnarray}}
\newcommand{\beqq}{\begin{eqnarray*}}
\newcommand{\eeq}{\end{eqnarray}}
\newcommand{\eeqq}{\end{eqnarray*}}

\begin{document}
\bibliographystyle{amsplain}

\title[Certain properties of univalent functions with real coefficients]{Certain properties of the class of univalent functions with real coefficients}

\author[M. Obradovi\'{c}]{Milutin Obradovi\'{c}}
\address{Department of Mathematics,
Faculty of Civil Engineering, University of Belgrade,
Bulevar Kralja Aleksandra 73, 11000, Belgrade, Serbia}
\email{obrad@grf.bg.ac.rs}

\author[N. Tuneski]{Nikola Tuneski}
\address{Department of Mathematics and Informatics, Faculty of Mechanical Engineering, Ss. Cyril and Methodius
University in Skopje, Karpo\v{s} II b.b., 1000 Skopje, Republic of Macedonia.}
\email{nikola.tuneski@mf.edu.mk}

\subjclass{30C45, 30C50, 30C55}
\keywords{univalent, real coefficients, logarithmic coefficients, coefficient estimates, Hankel determinant, Zalcman conjecture}

\maketitle

\begin{abstract}
Let $\U^+$ be the class of analytic functions  $f$ such that $\frac{z}{f(z)}$ has real and positive coefficients and $f^{-1}$ be its inverse. In this paper we give sharp estimates of the initial coefficients and initial logarithmic coefficients for $f$, as well as, sharp estimates of the second  and the third Hankel determinant for $f$ and $f^{-1}$. We also show that the Zalcman conjecture holds for functions $f$ from $\U^+$.
\end{abstract}


\section{Introduction}
Let $\mathcal{A}$ be the class of functions $f$ that are analytic  in the open unit disc $\D=\{z:|z|<1\}$ of the form
\begin{equation}\label{e1}
f(z)=z+a_2z^2+a_3z^3+\cdots,
\end{equation}
let $\mathcal{S}$ be its subclass consisting of univalent functions from $\A$, and $\mathcal{S}^+$ consists of functions $f$ from $\es$ with representation
\begin{equation}\label{e3}
\frac{z}{f(z)} = 1+b_1z+b_2z^2+\cdots,\qquad b_n\ge0, \quad n=1,2,3,\ldots.
\end{equation}
Note that the Silverman class of univalent functions with negative coefficients, i.e., with  expansion
\[  f(z) = z-a_2z^2-a_3z^3+\cdots,\qquad a_n\ge0, \quad n=1,2,3,\ldots, \]
is subclass of $\es^+$ since $z/f(z)$ satisfies \eqref{e3}. Also, the Koebe function $k(z)=\frac{z}{(1+z)^2}$ is in $\es^+$.

\medskip

Further, with $\U(\lambda)$, $0\le\lambda<1$, we will denote the class of functions $f$ from $\A$ satisfying the condition
\begin{equation}\label{e2}
\left| \left( \frac{z}{f(z)} \right)^2f'(z)-1 \right|<\lambda,\quad z\in\D,
\end{equation}
while $\U\equiv \U(1)$. This class of functions attracts significant interest in the past decades. The class is intriguing because it doesn't follow the usual pattern to embed or be embedded in the class of starlike functions (functions that map the unit disk onto a starlike region). A collection of the more significant once can be found in \cite[Chapter 12]{TTV}.

\medskip

If we denote with $\U^+(\lambda)$ the class of functions that satisfy \eqref{e3} and \eqref{e2}, and additionally $\U^+\equiv\U^+(1)$, then \cite{OP_2009} we have the following equivalence
\[ f\in\es^+ \quad \Leftrightarrow \quad f\in\U^+\quad \Leftrightarrow \quad  \sum_{n=2}^\infty (n-1)b_n\le1. \]

\medskip

Next lemma makes an extension of the second equivalence.

\medskip

\begin{lem}\label{lem-1}
$f\in\U^+(\lambda) \Leftrightarrow   \sum_{n=2}^\infty (n-1)b_n\le\lambda.$
\end{lem}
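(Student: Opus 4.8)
The plan is to reduce the defining inequality \eqref{e2} to an explicit statement about the power series of $\sum (n-1)b_n z^n$ and then read off the equivalence from the sign condition $b_n\ge0$. The first step, which is really the heart of the matter, is an algebraic simplification. Writing $w(z)=z/f(z)=1+\sum_{n\ge1}b_nz^n$, so that $f=z/w$ and $f'=(w-zw')/w^2$, the weight $(z/f)^2$ cancels the denominator and leaves
\[ \left( \frac{z}{f(z)} \right)^2 f'(z) = w(z)-zw'(z). \]
Substituting the series and observing that the $n=1$ contribution vanishes, I expect to obtain
\[ \left( \frac{z}{f(z)} \right)^2 f'(z)-1 = -\sum_{n=2}^\infty (n-1)b_n z^n, \]
so that \eqref{e2} is \emph{exactly} the assertion that $\bigl|\sum_{n=2}^\infty (n-1)b_n z^n\bigr|<\lambda$ for every $z\in\D$.

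For the implication $(\Leftarrow)$, I would assume $\sum_{n=2}^\infty(n-1)b_n\le\lambda$. Since each coefficient is nonnegative and $|z|<1$, the triangle inequality together with $|z|^n<1$ gives
\[ \left| \sum_{n=2}^\infty (n-1)b_n z^n \right| \le \sum_{n=2}^\infty (n-1)b_n|z|^n < \sum_{n=2}^\infty (n-1)b_n \le \lambda, \]
with the strict middle inequality holding whenever some $b_n>0$ and the claim being trivial otherwise. This yields \eqref{e2}.

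For $(\Rightarrow)$, I would assume \eqref{e2} and restrict to real arguments $z=r\in(0,1)$. Because every term $(n-1)b_n r^n$ is nonnegative, the inequality reads $\sum_{n=2}^\infty (n-1)b_n r^n<\lambda$ for all such $r$; letting $r\to1^-$ and using monotone convergence (the partial sums increase in $r$; equivalently Abel's theorem) produces $\sum_{n=2}^\infty(n-1)b_n\le\lambda$, closing the equivalence.

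The genuinely essential work is the opening identity; once \eqref{e2} is recast as a bound on $\sum(n-1)b_n z^n$, the hypothesis $b_n\ge0$ carries everything else, so no real obstacle survives. The one point deserving care is the passage from the strict inequality in \eqref{e2} to the non-strict $\le\lambda$ in the boundary limit $r\to1^-$, where strictness is in general lost.
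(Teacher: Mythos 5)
Your proposal is correct and follows essentially the same route as the paper: the same identity $\left(\frac{z}{f(z)}\right)^2 f'(z)-1=-\sum_{n=2}^\infty (n-1)b_nz^n$, the same radial limit $r\to1^-$ for the forward implication, and the same triangle-inequality estimate for the converse. If anything, you are slightly more careful than the paper on the edge case where all $b_n$ vanish and on justifying the boundary limit, but the substance is identical.
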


\begin{proof}
If $f\in\U^+(\lambda) $, then by definition
\[
\begin{split}
\quad \left| \left( \frac{z}{f(z)} \right)^2f'(z)-1 \right| = \left| \frac{z}{f(z)} - z\left( \frac{z}{f(z)} \right)' -1 \right| = \left|-\sum_{n=2}^\infty (n-1)b_nz^n \right| <\lambda,
\end{split}
\]
$z\in\D.$ For real $z$ and $z\rightarrow 1$ from left, the last inequality gives $\sum_{n=2}^\infty (n-1)b_n\le\lambda.$

\medskip

If $\sum_{n=2}^\infty (n-1)b_n\le\lambda$, then
\[ \left| \left( \frac{z}{f(z)} \right)^2f'(z)-1 \right| =\left|-\sum_{n=2}^\infty (n-1)b_nz^n \right|
\le \sum_{n=2}^\infty (n-1)b_n|z|^n <  \sum_{n=2}^\infty (n-1)b_n \le \lambda,\]
$z\in\D$, which mens that $f\in\U^+(\lambda) $.
\end{proof}

\medskip

In \cite{OT-2020-1} the authors gave sharp bounds of the first five coefficients in the expansion of $f$. The result for the case $\lambda=1$ reduces to the following.

\medskip

\noindent
{\bf Theorem A.}
{\it
Let $f(z)=z+a_2z^2+a_3z^3+\cdots \in\es^+=\U^+$. Then the following estimates are sharp
\[ -2\le a_2\le0, \quad -1\le a_3\le3, \quad -4\le a_4\le\frac43\sqrt{\frac23}, \quad -\frac94\le a_5\le5. \]
}

\medskip

Let note that for the general class $\U$ we have $|a_n|\le n$, $n=2,3,\ldots$, independently from the de Branges theorem. Namely, for the functions $f$ from $\U$, we have
\[ \frac{f(z)}{z}\prec \frac{1}{(1-z)^2} = 1+\sum_{n=1}^\infty nz^{n-1}, \]
and the rest follows from the Rogosinski theorem (\cite[Theorem 3.2.9, p.35]{TTV}).

\medskip

Further, in \cite{OP_2016}, the authors proved that for $f$ in the general class $\U(\lambda)$, $0<\lambda\le1$,  $|a_2|\le 1+\lambda$, and if $|a_2|=1+\lambda$, then $f$ must be of the form
\[ f_\theta (z) = \frac{z}{1-(1+\lambda)e^{i\theta}z+\lambda e^{2i\theta}z^2}. \]
For the case $f\in\U^+(\lambda)$ we have the following corresponding result.

\medskip

\noindent
{\bf Theorem B.}
{\it
If $f$ is given by \eqref{e1} with \eqref{e3}, and $f\in\U^+(\lambda)$, $0<\lambda\le1$, then $-(1+\lambda)\le a_2\le0$ (or $0\le b_1\le1+\lambda$). Moreover, if $a_2=-(1+\lambda)$, then $f$ must be of the form
\begin{equation}\label{e5}
f_\lambda(z) = \frac{z}{1+(1+\lambda)z+\lambda z^2},
\end{equation}
i.e., $b_2=\lambda$, and $b_3=b_4=\cdots=0$,
}

\medskip

In this paper we study coefficient problems for inverse functions of functions in $\U^+(\lambda)$, more precisely we will give sharp upper bounds of the leading coefficients and leading logarithmic coefficients of the inverse functions, as well as estimates of the modulus of the second and the third Hankel determinant.

\section{Coefficient estimates}

The famous Koebe 1/4 theorem guaranties that each function $f$ from $\es$ has an inverse at least on a disk with radius 1/4. Let the inverse function has an expansion
\begin{equation}\label{e6}
f^{-1}(w) = w+A_2w^2+A_3w^3+\cdots
\end{equation}
on the disk $|w|<\frac14$. By using the identity $f(f^{-1}(w))=w$, and representations \eqref{e1} and \eqref{e6}, we obtain the following relations:
\begin{equation}\label{e7}
\begin{array}{l}
A_{2}=-a_{2}, \\[2mm]
A _{3}=-a_{3}+2a_{2}^{2} , \\[2mm]
A_{4}= -a_{4}+5a_{2}a_{3}-5a_{2}^{3}.
\end{array}
\end{equation}

\medskip

We now give sharp bounds of $A_2$, $A_3$ and $A_4$.

\medskip

\begin{thm}\label{th-1}
Let $f\in \U^+(\lambda)$, $0<\lambda\le1$, and $f^{-1}$ is given by \eqref{e6}. Then, the following estimates are sharp:
\begin{equation}\label{e8}
\begin{array}{l}
0\le A_{2} \le 1+\lambda, \\[2mm]
0\le A _{3}\le 1+3\lambda+\lambda^2 , \\[2mm]
0\le A_{4} \le (1+\lambda)(1+5\lambda+\lambda^2).
\end{array}
\end{equation}
\end{thm}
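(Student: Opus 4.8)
The plan is to push everything back onto the coefficients $b_n$ of $z/f(z)$ in \eqref{e3}, since those are the quantities for which Lemma~\ref{lem-1} and Theorem~B already give sharp control. First I would invert $f(z)=z/(1+b_1z+b_2z^2+\cdots)$ by a direct power-series expansion, obtaining $a_2=-b_1$, $a_3=b_1^2-b_2$ and $a_4=-b_1^3+2b_1b_2-b_3$. Substituting these into \eqref{e7} and simplifying, I expect the clean identities
\[
A_2=b_1,\qquad A_3=b_1^2+b_2,\qquad A_4=b_1^3+3b_1b_2+b_3 .
\]
Since every $b_n\ge0$ by \eqref{e3}, each right-hand side is a sum of nonnegative terms, which gives the lower bounds $A_2,A_3,A_4\ge0$ at once, with equality for the identity $f(z)=z$ (all $b_n=0$).

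For the upper bounds I would feed in the only two constraints available: $0\le b_1\le 1+\lambda$ from Theorem~B and $\sum_{n\ge2}(n-1)b_n\le\lambda$ from Lemma~\ref{lem-1}, the latter yielding in particular $b_2\le\lambda$ and $b_2+2b_3\le\lambda$. The estimate for $A_2$ is immediate. For $A_3=b_1^2+b_2$ the two bounds $b_1\le1+\lambda$ and $b_2\le\lambda$ combine to $A_3\le(1+\lambda)^2+\lambda=1+3\lambda+\lambda^2$, and both are active simultaneously at the function $f_\lambda$ of \eqref{e5}, giving sharpness.

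The delicate case is $A_4=b_1^3+3b_1b_2+b_3$, where bounding $b_2$ and $b_3$ separately (by $\lambda$ and $\lambda/2$) overshoots the claimed value: one must handle $3b_1b_2+b_3$ jointly under the single constraint $b_2+2b_3\le\lambda$. The key observation is that, with $b_1$ fixed, one unit of the budget $b_2+2b_3$ spent on $b_2$ is worth $3b_1$, whereas spent on $b_3$ it is worth only $1/2$; hence for $b_1>1/6$ the optimum is $b_2=\lambda$, $b_3=0$, which collapses $A_4$ to $b_1^3+3\lambda b_1$. This is increasing in $b_1$, so it is maximized at $b_1=1+\lambda$, producing $(1+\lambda)^3+3\lambda(1+\lambda)=(1+\lambda)(1+5\lambda+\lambda^2)$. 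I would phrase this rigorously by noting that $\{\,b_1\le1+\lambda,\ b_2+2b_3\le\lambda,\ b_n\ge0\,\}$ is a relaxation of the true coefficient region, so maximizing $A_4$ over it yields a valid upper bound; since the maximizer $(b_1,b_2,b_3)=(1+\lambda,\lambda,0)$ is exactly the admissible function $f_\lambda$, the bound is attained and therefore sharp.

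The main obstacle is precisely this coupling between $b_2$ and $b_3$ inside $A_4$: the naive approach of estimating each coefficient against its individual maximum fails, and the argument only goes through once one exploits that the weighted budget $\sum(n-1)b_n$ is far more efficiently invested in $b_2$ (which carries the large factor $3b_1$) than in $b_3$. Everything else — the coefficient identities, the nonnegativity, and the common extremal function $f_\lambda$ — I expect to be routine.
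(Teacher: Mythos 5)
Your proposal is correct and takes essentially the same route as the paper: the identities $A_2=b_1$, $A_3=b_1^2+b_2$, $A_4=b_1^3+3b_1b_2+b_3$, the bound $0\le b_1\le 1+\lambda$, the constraint $b_2+2b_3\le\lambda$ from Lemma~\ref{lem-1}, the nonnegativity of the $b_n$ for the lower bounds, and the extremal function $f_\lambda$ are all exactly what the paper uses. The paper resolves the $b_2$--$b_3$ coupling by substituting $b_3\le\frac12(\lambda-b_2)$ and observing that the resulting expression is linear in $b_2$ with positive slope $\frac52+3\lambda$, which is your ``budget'' argument carried out in a different order; the only loose end in your version is the branch $b_1\le\frac16$, where $A_4\le b_1^3+\frac{\lambda}{2}$ is trivially far below the claimed bound, so the one-line completion should be stated for rigor.
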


\begin{proof}
Since $f\in \U^+(\lambda)$, $0<\lambda\le1$, then by definition
\[  f(z) = z+a_2z^2+a_3z^3+\cdots=\frac{z}{1+b_1z+b_2z^2+\cdots},\]
where $b_n\ge0, \quad n=1,2,3,\ldots.$ After comparing the coefficients we receive,
\begin{equation}\label{e9}
\begin{array}{l}
a_{2} = -b_1, \\[2mm]
a _{3} = -b_2+b_1^2, \\[2mm]
a_{4} = -b_3+2b_1b_2-b_1^3,\\[2mm]
a_{5} = -b_4+b_2^2+2b_1b_3-3b_1^2b_2+b_1^4,
\end{array}
\end{equation}
and by combining \eqref{e9} with \eqref{e7},
\begin{equation}\label{e12}
\begin{array}{l}
A_{2}=b_1   , \\[2mm]
A _{3}= b_2+b_1^2 , \\[2mm]
A_{4}= b_3+3b_1b_2+b_1^3.
\end{array}
\end{equation}

\medskip

Since $a_2$ is real (so are $a_3$, $a_4$,\ldots), and $|a_2|\le1+\lambda$ for all functions from $\U(\lambda)$ (see \cite{vasu-2013} or \cite[Theorem 12.3.1, p.188]{TTV}), we receive $-(1+\lambda)\le a_2\le1+\lambda$.

\medskip

From $A_2=-a_2=b_1$, $b_1\ge0$ and $a_2\le1+\lambda$, easily follows the estimate
\begin{equation}\label{e10}
0\le A_2=b_1\le1+\lambda.
\end{equation}

\medskip

Further, from Lemma \ref{lem-1} we have
\[ b_2+2b_3+3b_4+\cdots \le\lambda,  \]
which implies
\begin{equation}\label{e11}
  \begin{split}
    0 \le b_2 &\le \lambda \\
    b_2+2b_3 &\le \lambda \quad \left( \Leftrightarrow b_3\le \frac12(\lambda-b_2) \right)\\
    b_2+2b_3+3b_4 &\le \lambda \quad \left( \Leftrightarrow b_4\le \frac13(\lambda-2b_3-b_2) \right).
  \end{split}
\end{equation}

\medskip

Combining \eqref{e10}, \eqref{e11} and \eqref{e12}, we receive
\[ 0\le A_3 \le \lambda+(1+\lambda)^2 = 1+3\lambda+\lambda^2,\]
and for $A_4$,
\[
\begin{split}
0 \le A_4 &\le \frac12(\lambda-b_2)+3(1+\lambda)b_2 +(1+\lambda)^3\\
&= \frac{\lambda}{2}+\left(\frac52+3\lambda\right)b_2+(1+\lambda)^3\\
&\leq \frac{\lambda}{2}+\left(\frac52+3\lambda\right)\lambda+(1+\lambda)^3\\
&= (1+\lambda)(1+5\lambda+\lambda^2).
\end{split}
\]
\medskip

For the sharpness of the upper bounds, for the function $f_\lambda(z)$ defined by \eqref{e5}, we have
\[\begin{split}
  f_\lambda(z) &= \frac{z}{1+(1+\lambda)z+\lambda z^2} \\
  &= z-(1+\lambda)z^2+(1+\lambda+\lambda^2)z^3 - (1+\lambda+\lambda^2+\lambda^3)z^4+\cdots,
  \end{split}
\]
i.e.,  $f_{\lambda}\in\U^+(\lambda)$ and
\[ z = f_{\lambda}^{-1}(w) = w+(1+\lambda)w^2 + (1+3\lambda+\lambda^2)w^3+(1+\lambda)(1+5\lambda+\lambda^2)w^4+\cdots. \]

\medskip

For the sharpness of the lower bounds it is enough to consider the inverse function of the function
\[ f(z) = \frac{z}{1+\lambda z^4/3},\]
where $b_1=b_2=b_3=0$.
\end{proof}

\medskip

\begin{rem}
In \cite{OT-2021-2}, the authors obtained the same result, but with an additional condition
\begin{equation}\label{cond}
\frac{f(z)}{z}\prec\frac{1}{(1+z)(1+\lambda z)}.
\end{equation}
In our proof we didn't use that condition.
\end{rem}

\begin{rem}\label{rem-2}
From \eqref{e9} we have $a_3=b_1^2-b_2\equiv \psi_1(b_1)$, $0\le b_1\le 1+\lambda$. It is evident that $\psi_1(b_1)\ge -b_2\ge-\lambda$, while $\max\psi_1=\psi_1(1+\lambda)$. But, $b_1=1+\lambda$ implies $b_2=\lambda$, so
\[\psi_1(1+\lambda)=(1+\lambda)^2-\lambda=1+\lambda+\lambda^2.\]
Similarly, by \eqref{e9} we have
\[ -a_4=b_1^3-2b_1b_2+b_3\equiv \psi_2(b_1),\]
$0\le b_1\le1+\lambda$, and by using the previous comments,
\[ \max\psi_2 = \psi_2(1+\lambda) = (1+\lambda)^3-2(1+\lambda)\lambda = 1+\lambda+\lambda^2+\lambda^3.\]

\medskip

It means that $a_4\ge -(1+\lambda+\lambda^2+\lambda^3)$. If we use the upper bound for $a_4$ given in \cite{OT-2020-1}, then
\[ |a_4|\le 1+\lambda+\lambda^2+\lambda^3 \]
for $f\in\U^+(\lambda)$.
It means that the results given in Theorem 3 from \cite{OT-2020-1} are valid independently of the condition \eqref{cond}.

\medskip

This is the reason we pose the next conjecture.
\end{rem}

\begin{conj}
For the functions in the class $\U^+(\lambda)$,
\[ |a_n|\le \frac{1-\lambda^n}{1-\lambda} = 1+\lambda+\lambda^2+\lambda^3+\cdots+\lambda^n, \quad n=2,3,4,\ldots.\]
This was proven to hold for the general class $\U(\lambda)$ under the condition \eqref{cond} in the cases $n=2,3,4$ (\cite{mont} or \cite[Theorem 12.3.2, p.191]{TTV}.
\end{conj}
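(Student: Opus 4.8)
The plan is to bound $|a_n|$ directly from $f(z)/z=1/\phi(z)$, where $\phi(z)=z/f(z)=1+\sum_{k\ge1}b_kz^k$ has $b_k\ge0$, $\sum_{k\ge2}(k-1)b_k\le\lambda$ by Lemma \ref{lem-1}, and $0\le b_1\le1+\lambda$ by Theorem B. Expanding $\phi\cdot(f/z)=1$ as in \eqref{e9} gives the recursion $a_1=1$, $a_n=-\sum_{j=1}^{n-1}b_j\,a_{n-j}$ for $n\ge2$. Writing $S_n:=(1-\lambda^n)/(1-\lambda)=1+\lambda+\cdots+\lambda^{n-1}$ (so $S_n=n$ when $\lambda=1$, recovering the bound $|a_n|\le n$ recalled above), the goal is $|a_n|\le S_n$. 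Sharpness is immediate: for $f_\lambda$ in \eqref{e5} one has $z/f_\lambda=(1+z)(1+\lambda z)$, and a partial-fraction expansion yields $a_n=(-1)^{n-1}S_n$, so equality holds for every $n$.

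I would argue by induction on $n$, and since the extremal coefficients alternate in sign I would pass to $\beta_k:=(-1)^{k-1}a_k$ (so $\beta_k=S_k>0$ for $f_\lambda$), which satisfy $\beta_1=1$ and
\[ \beta_n=\sum_{j=1}^{n-1}(-1)^{j-1}b_j\,\beta_{n-j}=b_1\beta_{n-1}-b_2\beta_{n-2}+b_3\beta_{n-3}-\cdots, \]
reducing for $f_\lambda$ to $\beta_n=(1+\lambda)\beta_{n-1}-\lambda\beta_{n-2}$. The target becomes $|\beta_n|\le S_n$. The principal difficulty appears at once: applying the triangle inequality to $a_n=-\sum_jb_ja_{n-j}$, together with $b_1\le1+\lambda$, $\sum_{j\ge2}b_j\le\sum_{j\ge2}(j-1)b_j\le\lambda$ and $S_{n-j}\le S_{n-2}$ for $j\ge2$, yields only
\[ |a_n|\le(1+\lambda)S_{n-1}+\lambda S_{n-2}=S_n+2(\lambda+\lambda^2+\cdots+\lambda^{n-2}), \]
which overshoots $S_n$. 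The cancellation among the alternating terms $b_1\beta_{n-1}-b_2\beta_{n-2}+\cdots$ is exactly what $f_\lambda$ exploits, so any successful proof must keep it.

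Retaining that cancellation forces a \emph{two-sided} induction. To bound $\beta_n$ from above one needs the subtracted terms $b_2\beta_{n-2},b_4\beta_{n-4},\ldots$ not to become large and negative, i.e. one needs sharp \emph{lower} bounds on $\beta_{n-2},\beta_{n-4},\ldots$; symmetrically, bounding $\beta_n$ from below requires upper bounds on those lags. Thus I would set up interlocking envelopes $m_k\le\beta_k\le S_k$, with a lower envelope $m_k$ to be pinned down so the induction closes (presumably realized by a secondary extremal of $\U^+(\lambda)$), and prove both inequalities simultaneously, the two parities feeding each other. For frozen $\beta_{n-1},\ldots,\beta_1$ the extremization of $\beta_n$ over the admissible $(b_1,\ldots,b_{n-1})$ is a linear program in the $b_j$ with the single constraint $\sum_{j\ge2}(j-1)b_j\le\lambda$ and $b_1\le1+\lambda$, whose optimum sits at a vertex concentrating $\{b_j\}_{j\ge2}$ on one index.

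The crux — and the reason the problem is open beyond $n=4$ — is that the $\beta_{n-j}$ are not free parameters: each is itself a polynomial in $b_1,\ldots,b_{n-j-1}$ (already nonlinear, witness the $b_2^2$ in $a_5$ in \eqref{e9}), so the vertex chosen for the $b_j$ must be consistent with the very values $\beta_{n-j}$ it produces. Controlling this coupling is the heart of the matter. As a fallback I would instead try to prove that $f\in\U^+(\lambda)$ forces the subordination \eqref{cond}, namely $f(z)/z\prec1/\big((1+z)(1+\lambda z)\big)$, unconditionally, after which a Rogosinski-type coefficient estimate for this majorant would finish — in the spirit of deducing $|a_n|\le n$ from $f(z)/z\prec1/(1-z)^2$ above; but the alternating-sign majorant makes that estimate itself delicate, and the existing results reach only $n=2,3,4$, so a new coefficient lemma tailored to $1/((1+z)(1+\lambda z))$ would be required. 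I expect the interlocked two-sided induction to be the more promising route, with the sign-coupling of the $\beta_k$ as the decisive hurdle.
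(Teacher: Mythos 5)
The statement you were given is a \emph{conjecture}: the paper does not prove it, and explicitly poses it as an open problem (the only known cases, $n=2,3,4$, are proved elsewhere and only under the extra subordination hypothesis \eqref{cond}). So there is no proof in the paper to compare yours against, and your own text is, by its own admission, not a proof either --- it is a strategy analysis. What you do establish correctly is worthwhile: the recursion $a_n=-\sum_{j=1}^{n-1}b_ja_{n-j}$, the sharpness claim (for $f_\lambda$ one indeed gets $a_n=(-1)^{n-1}S_n$ with $S_n=1+\lambda+\cdots+\lambda^{n-1}$, matching the paper's expansion of $f_\lambda$), the observation that the naive triangle-inequality bound overshoots by exactly $2(\lambda+\cdots+\lambda^{n-2})$, and --- implicitly --- a correction of a typo in the conjecture's statement, since $\frac{1-\lambda^n}{1-\lambda}$ equals $1+\lambda+\cdots+\lambda^{n-1}$, not the sum to $\lambda^n$ (your reading is the one consistent with $|a_n|\le n$ at $\lambda=1$ and with the case $|a_4|\le1+\lambda+\lambda^2+\lambda^3$ in Remark \ref{rem-2}).

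The genuine gap is the one you name yourself but do not close: in the proposed two-sided induction the lower envelopes $m_k$ are never constructed, and without them neither parity of the induction can be carried out; moreover the linear-programming step treats $\beta_{n-1},\ldots,\beta_1$ as frozen, whereas each $\beta_{n-j}$ is itself a polynomial in the same variables $b_1,b_2,\ldots$, so the vertex you select for the $b_j$ need not be attainable jointly with the extremal values of the $\beta_{n-j}$ --- this coupling is precisely why the problem is open, and your sketch contains no mechanism for handling it. The fallback route (proving that membership in $\U^+(\lambda)$ forces the subordination \eqref{cond} unconditionally, then a Rogosinski-type lemma for the majorant $1/((1+z)(1+\lambda z))$) is likewise only gestured at; note that even with \eqref{cond} in hand, the literature cited in the paper reaches only $n=2,3,4$, so the missing coefficient lemma is itself a substantial open step, not a routine verification. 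In short: your diagnosis of the difficulty is accurate and consistent with the authors' decision to pose this as a conjecture, but nothing in the proposal constitutes progress toward resolving it beyond the known sharpness of the candidate bound.
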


\medskip

In \cite{OT-2020-1} the authors studied the logarithmic coefficients for the class $\U^+$ ($\es^+$), and here we will continue and give estimates for the logarithmic coefficients of the inverse functions of functions in the classes $\U^+(\lambda)$. For the logarithmic coefficients ($\Gamma_1$, $\Gamma_2$, $\Gamma_3$,\ldots) we have
\[\log \frac{f^{-1}(w)}{w}=\log(1+A_{2}w+A_{3}w^{2}+\cdots) ,\]
or
\[\sum _{n=1}^{\infty}2\Gamma_{n}w^{n}=A_{2}w + \left(A_3-\frac{1}{2}A_2^{2}\right)w^{2}
+\left(A_4-A_2A_3+\frac13A_2^3\right)w^{3}+\cdots  .\]
From the last relation we get
\begin{equation}\label{e13}
\begin{array}{l}
\Gamma_1 = \frac12A_2 = \frac12 b_1,\\[2mm]
\Gamma_2 = \frac12\left(A_3-\frac12A_2^2 \right) = \frac12\left(b_2+\frac12b_1^2 \right) ,\\[2mm]
\Gamma_3 = \frac12\left( A_4-A_2A_3+\frac13A_2^3 \right) = \frac12\left( b_3+2b_1b_2+\frac13b_1^3 \right) ,
\end{array}
\end{equation}
where we used relations \eqref{e12}.

\medskip

\begin{thm}
Let $f\in\U^+(\lambda)$, $0< \lambda\le1$. Then:
\begin{itemize}
  \item[(a)] $0\le\Gamma_1\le \frac{1+\lambda}{2}$;
  \item[(b)] $0\le\Gamma_2\le \frac14(1+4\lambda+\lambda^2)$;
  \item[(c)] $0\le\Gamma_3\le \frac16(1+\lambda)(1+8\lambda+\lambda^2)$,
\end{itemize}
and all results are sharp.
\end{thm}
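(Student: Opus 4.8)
The plan is to reduce everything to the two basic constraints on the coefficients $b_n$ that are already available, namely $0\le b_1\le 1+\lambda$ from \eqref{e10} and the cascade of inequalities in \eqref{e11} coming from Lemma \ref{lem-1}. Since all $b_n\ge 0$, the expressions for $\Gamma_1$, $\Gamma_2$, $\Gamma_3$ in \eqref{e13} are manifestly nonnegative, so the lower bounds are immediate; the work is entirely in the upper bounds.

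For part (a) the estimate is trivial: $\Gamma_1=\tfrac12 b_1\le\tfrac12(1+\lambda)$. For part (b), I would write $\Gamma_2=\tfrac12\bigl(b_2+\tfrac12 b_1^2\bigr)$ and bound $b_2\le\lambda$ and $b_1^2\le(1+\lambda)^2$ independently, giving $\Gamma_2\le\tfrac12\bigl(\lambda+\tfrac12(1+\lambda)^2\bigr)=\tfrac14(1+4\lambda+\lambda^2)$, which is exactly the claimed bound. For part (c), I would start from $\Gamma_3=\tfrac12\bigl(b_3+2b_1b_2+\tfrac13 b_1^3\bigr)$ and substitute $b_3\le\tfrac12(\lambda-b_2)$ from \eqref{e11}, then treat the result as a function of $b_2$ with $b_1$ fixed at its maximum. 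Mimicking the $A_4$ computation in the proof of Theorem \ref{th-1}, the coefficient of $b_2$ will be positive, so the maximum over $0\le b_2\le\lambda$ is attained at $b_2=\lambda$, and setting $b_1=1+\lambda$ (which forces $b_2=\lambda$, $b_3=0$ for the extremal function) should collapse to $\tfrac16(1+\lambda)(1+8\lambda+\lambda^2)$.

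For sharpness I would invoke the same extremal function $f_\lambda$ from \eqref{e5} used in Theorem \ref{th-1}: it has $b_1=1+\lambda$, $b_2=\lambda$, $b_3=b_4=\cdots=0$, so one simply plugs these values into \eqref{e13} and checks that each upper bound is achieved. The lower bounds are attained by any function with $b_1=0$, such as the inverse of $f(z)=z/(1+\lambda z^4/3)$ already exhibited in the proof of Theorem \ref{th-1}.

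The only genuine subtlety is in part (c): one must verify that maximizing over $b_2$ at $b_2=\lambda$ is simultaneously compatible with $b_1=1+\lambda$. This is reassuring because the constraint $b_1=1+\lambda$ forces $b_2=\lambda$ via Theorem B, so the two maximizations are consistent and the bound is attained by a single admissible function rather than only in a limiting sense. I expect the main obstacle to be purely bookkeeping: confirming that the coefficient of $b_2$ in the bound for $\Gamma_3$ is nonnegative (so that pushing $b_2$ to $\lambda$ is correct) and that the resulting algebraic expression simplifies to the stated form $\tfrac16(1+\lambda)(1+8\lambda+\lambda^2)$.
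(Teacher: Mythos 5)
Your proposal is correct and takes essentially the same route as the paper, whose entire proof is the remark that it is ``similar to the one of Theorem \ref{th-1}'': namely, combine \eqref{e13} with the constraints \eqref{e10} and \eqref{e11}, maximize first in $b_1$ (valid since the partial derivatives in $b_1$ are nonnegative) and then in $b_2$, and use $f_\lambda$ from \eqref{e5} and $f(z)=z/(1+\lambda z^4/3)$ for sharpness of the upper and lower bounds respectively. Your computation indeed collapses correctly, since $\frac12\left(\frac{\lambda}{2}+\left(\frac32+2\lambda\right)\lambda+\frac13(1+\lambda)^3\right)=\frac16\left(1+9\lambda+9\lambda^2+\lambda^3\right)=\frac16(1+\lambda)(1+8\lambda+\lambda^2)$.
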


\begin{proof}
The proof is similar to the one of Theorem \ref{th-1}.
\end{proof}

\medskip

\begin{cor}
For $\lambda=1$ we receive sharp estimates for the inverse function of functions from $\U^+(1)=\U^+=\es^+$:
\[ 0\le\Gamma_1\le1,\quad    0\le\Gamma_2\le\frac32,\quad  0\le\Gamma_3\le\frac{10}{3}.\]
\end{cor}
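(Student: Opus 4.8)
The plan is to reduce everything to the three nonnegative quantities $b_1,b_2,b_3$ via the formulas already recorded in \eqref{e13}, and then to optimise over the admissible region cut out by Lemma~\ref{lem-1} together with the bound $0\le b_1\le1+\lambda$ supplied by Theorem~B. Since $b_n\ge0$ for every $n$, all three expressions in \eqref{e13} are manifestly nonnegative, so the lower bounds $\Gamma_1,\Gamma_2,\Gamma_3\ge0$ require no work and the content lies entirely in the upper bounds. The constraints I would use are $0\le b_1\le1+\lambda$ and, extracted from $b_2+2b_3+3b_4+\cdots\le\lambda$ exactly as in \eqref{e11}, the chain $0\le b_2\le\lambda$ and $b_3\le\frac12(\lambda-b_2)$.

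Part (a) is immediate from $\Gamma_1=\frac12 b_1\le\frac12(1+\lambda)$. For part (b) I would substitute the two separate bounds $b_2\le\lambda$ and $b_1^2\le(1+\lambda)^2$ into $\Gamma_2=\frac12\left(b_2+\frac12 b_1^2\right)$; a short computation collapses $\frac{\lambda}{2}+\frac14(1+\lambda)^2$ to $\frac14(1+4\lambda+\lambda^2)$, which is the claimed bound. No coupling between the coefficients is needed here, because the two maxima $b_1=1+\lambda$ and $b_2=\lambda$ are attained simultaneously by $f_\lambda$ (Theorem~B), so nothing is wasted.

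Part (c) is where the only real care is required, and I would mimic the argument used for $A_4$ in the proof of Theorem~\ref{th-1}. Starting from $\Gamma_3=\frac12\left(b_3+2b_1b_2+\frac13 b_1^3\right)$, the first move is to replace $b_3$ by $\frac12(\lambda-b_2)$ and, in the two remaining terms, to use $b_1\le1+\lambda$ (legitimate since $b_2\ge0$), which produces an expression that is affine in $b_2$:
\[
b_3+2b_1b_2+\frac13 b_1^3\le\frac{\lambda}{2}+\left(\frac32+2\lambda\right)b_2+\frac13(1+\lambda)^3.
\]
The crucial observation is that after this substitution the coefficient $\frac32+2\lambda$ of $b_2$ is positive, so the right-hand side is maximised by taking $b_2=\lambda$; simplifying $\frac{\lambda}{2}+\left(\frac32+2\lambda\right)\lambda+\frac13(1+\lambda)^3$ and multiplying by $\frac12$ then yields $\frac16(1+\lambda)(1+8\lambda+\lambda^2)$. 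The main obstacle is precisely this ordering of substitutions: one must fix the $b_1$-dependence before optimising in $b_2$, since the naive combination $b_3=\lambda/2$ (i.e.\ $b_2=0$) together with $b_2=\lambda$ violates the single constraint $b_2+2b_3\le\lambda$.

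Finally, for sharpness I would exhibit $f_\lambda$ of \eqref{e5}, for which $b_1=1+\lambda$, $b_2=\lambda$ and $b_3=0$; inserting these into \eqref{e13} reproduces all three upper bounds exactly. For the lower bounds the extremal function is $f(z)=z/(1+\lambda z^4/3)$, which belongs to $\U^+(\lambda)$ (as $3b_4=\lambda$) and satisfies $b_1=b_2=b_3=0$, whence $\Gamma_1=\Gamma_2=\Gamma_3=0$.
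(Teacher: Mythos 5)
Your proposal is correct and takes essentially the same approach as the paper: the paper deduces this corollary by setting $\lambda=1$ in the preceding theorem, whose proof it declares ``similar to the one of Theorem \ref{th-1}'', and that is precisely the argument you reconstruct — the constraints $0\le b_1\le 1+\lambda$ and \eqref{e11} from Lemma \ref{lem-1}, the same ordering of substitutions as in the paper's bound for $A_4$ (eliminate $b_3$ via $\tfrac12(\lambda-b_2)$, insert $b_1\le 1+\lambda$, then maximize the affine expression in $b_2$), and the same extremal functions $f_\lambda$ from \eqref{e5} and $z/(1+\lambda z^4/3)$. Your computations check out, so the only difference is that you spell out the details the paper leaves implicit.
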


\medskip

\section{The second and the third Hankel determinant}

Another concept, rediscovered few years ago, attracting the  attention of mathematicians working in the field of univalent functions is so called Hankel determinant $H_{q}(n)(f)$ of a given function $f(z)=z+a_2z^2+a_3z^3+\cdots,$, for $q\geq 1$ and $n\geq 1$, defined by
\[
        H_{q}(n)(f) = \left |
        \begin{array}{cccc}
        a_{n} & a_{n+1}& \ldots& a_{n+q-1}\\
        a_{n+1}&a_{n+2}& \ldots& a_{n+q}\\
        \vdots&\vdots&~&\vdots \\
        a_{n+q-1}& a_{n+q}&\ldots&a_{n+2q-2}\\
        \end{array}
        \right |.
\]
Main interest is to find upper bound (preferably sharp) of the modulus of $H_{q}(n)(f)$. The general Hankel determinant is hard to deal with, so the second and the third ones,
\[H_{2}(2)(f)= \left|\begin{array}{cc}
        a_2& a_3\\
        a_3& a_4
        \end{array}
        \right | = a_2a_4-a_{3}^2\] and
\[ H_3(1)(f) =  \left |
        \begin{array}{ccc}
        1 & a_2& a_3\\
        a_2 & a_3& a_4\\
        a_3 & a_4& a_5\\
        \end{array}
        \right | = a_3(a_2a_4-a_{3}^2)-a_4(a_4-a_2a_3)+a_5(a_3-a_2^2),
\]
respectively, are studied instead. The research is focused on the subclasses of univalent functions (starlike, convex, $\alpha$-convex, close-to-convex, spirallike,\ldots) since the general class of normalised univalent functions is also hard to deal with. Some of the more significant results can be found in \cite{ckkls1,ckkls2,hayman-68,jhd1,jhd2,Kowalczyk-18,lss,kls,lrs,MONT-2019-2,zaprawa}.

\medskip

In this section we will give sharp bounds of the second  and the third Hankel determinant for the functions in $\U^+(\lambda)$ and for their inverse.

\medskip

Using \eqref{e9}, \eqref{e10} and \eqref{e11}, after some calculations, we receive
\begin{equation}\label{e14}
\begin{split}
  H_2(2)(f) &= b_1b_3-b_2^2,\\
  H_3(1)(f) &= b_2b_4-b_3^2.
  \end{split}
  \end{equation}

\medskip

\begin{thm}
Let  $f\in\U^+(\lambda)$, $0<\lambda\le1$. Then the following estimates are sharp:
\begin{itemize}
  \item[($i$)] $-\lambda^2 \le H_2(2)(f) \le \left(1-\frac{\lambda}{2} \right)\frac{\lambda}{2}$;
  \item[($ii$)] $-\frac{\lambda^2}{4} \le H_3(1)(f) \le \frac{\lambda^2}{12}$.
\end{itemize}
\end{thm}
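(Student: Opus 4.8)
The plan is to bound the two determinant expressions in \eqref{e14} directly using the constraint from Lemma~\ref{lem-1}, which gives the feasible region $b_2 + 2b_3 + 3b_4 + \cdots \le \lambda$ with all $b_n \ge 0$. Since $H_2(2)(f) = b_1 b_3 - b_2^2$ and $H_3(1)(f) = b_2 b_4 - b_3^2$, the key observation is that these involve only finitely many of the $b_n$, so I would first isolate the relevant variables and treat the remaining ones as free to vanish (setting them to zero only relaxes the constraint on the variables we care about).

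For the second Hankel determinant, I would split into the two one-sided bounds. The lower bound $-\lambda^2$ should come from making $b_1 b_3$ as small and $b_2^2$ as large as possible: taking $b_3 = 0$ leaves $-b_2^2$, and since $0 \le b_2 \le \lambda$ the worst case is $b_2 = \lambda$, giving $-\lambda^2$; one must check that driving $b_1 b_3$ negative is impossible since all coefficients are nonnegative, so the infimum is genuinely at $b_3 = 0$, $b_2 = \lambda$. For the upper bound, I would use $b_1 \le 1+\lambda$ and the chain $b_1 b_3 \le (1+\lambda) \cdot \tfrac12(\lambda - b_2)$ from \eqref{e11}, then maximise $(1+\lambda)\tfrac12(\lambda - b_2) - b_2^2$ over $0 \le b_2 \le \lambda$. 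This is a downward parabola in $b_2$, so I would locate its vertex, verify it falls in the admissible interval, and evaluate; the stated bound $\left(1 - \tfrac{\lambda}{2}\right)\tfrac{\lambda}{2}$ should drop out after simplification. A subtlety is that $b_1 = 1+\lambda$ forces $b_2 = \lambda$ and all higher $b_n = 0$ by Theorem~B, so one cannot freely combine $b_1 = 1+\lambda$ with a large $b_3$; I would check whether the optimum is actually attained at an interior configuration where $b_1$ is not extremal, and produce an explicit extremal function accordingly.

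For the third Hankel determinant $H_3(1)(f) = b_2 b_4 - b_3^2$, the variable $b_1$ drops out entirely, so the problem reduces to optimising over $b_2, b_3, b_4 \ge 0$ subject to $b_2 + 2b_3 + 3b_4 \le \lambda$. The lower bound $-\tfrac{\lambda^2}{4}$ should come from $b_4 = 0$, leaving $-b_3^2$ with $b_3 \le \lambda/2$, hence $-\lambda^2/4$. For the upper bound I would set $b_3 = 0$ to kill the negative term, reducing to $\max b_2 b_4$ subject to $b_2 + 3b_4 \le \lambda$; by AM--GM or a direct Lagrange-type argument on the boundary line $b_2 + 3b_4 = \lambda$, the product $b_2 b_4$ is maximised when $b_2 = \lambda/2$ and $3b_4 = \lambda/2$, giving $b_2 b_4 = \tfrac{\lambda}{2}\cdot\tfrac{\lambda}{6} = \tfrac{\lambda^2}{12}$. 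I would then confirm that allowing $b_3 > 0$ cannot help, since it both subtracts $b_3^2$ and tightens the budget available to $b_2$ and $b_4$.

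The main obstacle I anticipate is the sharpness verification rather than the bounds themselves: for each of the four extremal values I must exhibit a function in $\U^+(\lambda)$ realising it, and the natural candidates have several $b_n$ equal to zero and one or two nonzero. I would check that each such candidate indeed satisfies \eqref{e3} (nonnegative coefficients) and the membership condition $\sum (n-1) b_n \le \lambda$ with equality on the active constraint, and that $z/f(z)$ is a genuine power series defining an admissible $f$. The case $b_1 = 1+\lambda$ is the delicate one because it rigidly determines all other coefficients via Theorem~B, so for any extremal configuration requiring both a large $b_1$ and a nonzero higher coefficient I would need a different extremiser where $b_1$ stays off its maximum; identifying the correct extremal functions and confirming their admissibility is where the real care is needed.
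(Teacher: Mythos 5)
Your lower bounds and your part (ii) are correct and essentially identical to the paper's proof: in (ii) the variable $b_1$ disappears, the maximization of $b_2b_4$ under $b_2+3b_4\le\lambda$ is exactly the paper's computation, and your extremal candidates $z/(1+\frac{\lambda}{2}z^3)$ and $z/(1+\frac{\lambda}{2}z^2+\frac{\lambda}{6}z^4)$ are the paper's. The genuine gap is the upper bound in (i). The chain you propose, $b_1b_3-b_2^2\le(1+\lambda)\cdot\frac12(\lambda-b_2)-b_2^2$, cannot produce the stated bound: the right-hand side is decreasing in $b_2$ on $[0,\lambda]$ (the vertex of the parabola sits at $b_2=-(1+\lambda)/4<0$), so its maximum is $(1+\lambda)\frac{\lambda}{2}$ at $b_2=0$, strictly larger than $\left(1-\frac{\lambda}{2}\right)\frac{\lambda}{2}$; nothing ``drops out after simplification''. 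The constraint you are missing is not Theorem B's rigidity at $b_1=1+\lambda$ --- that deletes a single point from your feasible region and still leaves its supremum at $(1+\lambda)\frac{\lambda}{2}$ --- but the requirement that $z/f(z)=1+b_1z+b_2z^2+\cdots$ be zero-free in $\D$, without which $f$ has a pole in $\D$ and does not belong to $\A$ at all; this is implicit in \eqref{e3} and is not captured by Lemma \ref{lem-1} together with $b_1\le1+\lambda$. Since all $b_n\ge0$, letting $z\to-1$ along the real axis gives the necessary inequality $1-b_1+b_2-b_3+b_4-\cdots\ge0$, which couples $b_1$ to the higher coefficients throughout the feasible region. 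The paper's proof runs on exactly this point: on the slice $b_3=\frac{\lambda}{2}$ (which forces $b_2=b_4=\cdots=0$) it yields $b_1\le1-\frac{\lambda}{2}$, hence $b_1b_3\le\left(1-\frac{\lambda}{2}\right)\frac{\lambda}{2}$.

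You should also know, however, that your proof cannot be repaired, because the upper bound in (i) is false once $\lambda>\frac12$, and the paper's own argument has the complementary gap: it restricts to the slice $b_3=\frac{\lambda}{2}$ without justification, and the maximum does not occur there. An even-indexed coefficient enters the zero-free condition with a plus sign, so a positive $b_2$ buys back room for $b_1$ while costing only $b_2$ in the budget \eqref{e11} and $-b_2^2$ in the objective. Concretely, for $\lambda=1$ let
\[
\frac{z}{f(z)}=(1+z)\left(1-\frac{2}{7}z+\frac{3}{7}z^2\right)=1+\frac{5}{7}z+\frac{1}{7}z^2+\frac{3}{7}z^3 .
\]
Its coefficients are nonnegative, $b_2+2b_3=\frac17+\frac67=1\le\lambda$, and its zeros are $z=-1$ together with the roots of $3z^2-2z+7=0$, which have modulus $\sqrt{7/3}>1$; so $f$ is analytic in $\D$ and $f\in\U^+(1)$ by Lemma \ref{lem-1}. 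Yet, by \eqref{e14},
\[
H_2(2)(f)=b_1b_3-b_2^2=\frac{5}{7}\cdot\frac{3}{7}-\frac{1}{49}=\frac{2}{7}>\frac{1}{4}=\left(1-\frac{\lambda}{2}\right)\frac{\lambda}{2} .
\]
Carrying out the optimization of $b_1b_3-b_2^2$ with the zero-free constraint included yields the value $\frac{1}{28}\left(1+10\lambda-3\lambda^2\right)$ for $\frac12\le\lambda\le1$, which agrees with the paper's claim only at $\lambda=\frac12$ (for $\lambda\le\frac12$ the paper's bound does appear to be correct and sharp). So your closing instinct --- that the delicate point is identifying which coefficient configurations are admissible --- was exactly right; but the correct admissibility condition defeats the stated bound itself, not merely your route to it.
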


\begin{proof}$ $
\begin{itemize}
  \item[($i$)]
First, let note that from \eqref{e10} and \eqref{e11}, we have  $b_1,b_3\ge0$ and $b_2\le\lambda$, which easily implies
\[ H_2(2)(f) = b_1b_3-b_2^2 \ge -b_2^2\ge-\lambda^2. \]
The estimate is sharp due to the function $f(z)=\frac{z}{1+\lambda z^2}$.

  \medskip

For the upper bound, from \eqref{e14} we have $H_2(2)(f) = b_1b_3-b_2^2 \le b_1b_3$, and further from $0\le b_1\le 1+\lambda$ and $0\le b_3\le \frac{\lambda}{2}$ (see \eqref{e10} and \eqref{e11}) we need to find $\max\{b_1b_3\}$. For $b_3=\frac{\lambda}{2}$ ($\Rightarrow$ $b_2=b_4=\cdots=0$) we have
\[ H_2(2)(f)\le \frac{\lambda}{2} b_1.\]
If we consider the function $f$ defined by
\[ \frac{z}{f(z)} = 1+b_1z+\frac{\lambda}{2}z^3,\]
then $f$ is in $\U^+(\lambda)$ if $1+b_1z+\frac{\lambda}{2}z^3\neq 0$. Since
\[ \left.\frac{z}{f(z)} \right|_{z=1} = 1+b_1+\frac{\lambda}{2}>0 \quad \text{and} \quad \left.\frac{z}{f(z)} \right|_{z=-1} = 1-b_1-\frac{\lambda}{2}, \]
then it is necessary  $1-b_1-\frac{\lambda}{2}\ge0$. In contrary, if $1-b_1-\frac{\lambda}{2}<0$, then for $z=r$ real in $-1<r<1$, we have
\[ \left.\frac{z}{f(z)} \right|_{z=r} = 1+b_1r+\frac{\lambda}{2}r^3,  \]
has zero in $(-1,1)$, i.e.,   $f$ is not analytic on the unit disk.

\medskip

The previous condition is also a sufficient one. Namely, for $0\le b_1\le 1-\frac{\lambda}{2}$ and $0\le b_3\le\frac{\lambda}{2}$:
\[
\begin{split}
\left| \frac{z}{f(z)} \right| &= \left|1+b_1z+\frac{\lambda}{2}z^3 \right| \ge 1 - b_1|z| - \frac{\lambda}{2}|z|^3\\
& > 1 - \left(1-\frac{\lambda}{2}\right) - \frac{\lambda}{2}=0.
\end{split}
\]
So, $0\le b_1\le 1-\frac{\lambda}{2}$ and $H_2(2)(f) \le \left(1-\frac{\lambda}{2} \right)\frac{\lambda}{2}$. The upper bound is sharp due to the function $f(z)=\frac{z}{1+(1-\lambda/2)z +\lambda/2 z^3}$.

\medskip

\item[($ii$)] Similarly as in the proof of part (a), from \eqref{e14} and \eqref{e11},
\[
\begin{split}
H_3(1)(f) &\le b_2\cdot \frac13(\lambda-b_2-2b_3)-b_3^2 = \frac13\lambda b_2 - \frac13 b_2^2 - \frac23 b_2b_3-b_3^2\\
& \le \frac13\lambda b_2 - \frac13 b_2^2\le \frac{\lambda^2}{12}    .
\end{split}
\]
Also, $H_3(1)(f) \ge -b_3^2\ge -\left(\frac{\lambda}{2}\right)^2 = - \frac{\lambda^2}{4}$, since by \eqref{e11}, $b_3\le\frac{\lambda-b_2}{2}\le \frac{\lambda}{2}.$ The functions $f(z)=\frac{z}{1+\lambda/2z^3}$ and $f(z)=\frac{z}{1+\lambda/2z^2+\lambda/6z^4}$ show that the estimates are sharp.
\end{itemize}
\end{proof}

\medskip

For the sharp estimates of the Hankel determinant for the inverse of the functions from $\U^+(\lambda)$ we have the following theorem.

\medskip

\begin{thm}
Let  $f\in\U^+(\lambda)$, $0<\lambda\le1$. Then the following estimates are sharp:
\begin{itemize}
  \item[($i$)] $-\lambda^2 \le H_2(2)(f^{-1}) \le \lambda(1+\lambda+\lambda^2)$;
  \item[($ii$)] $-\frac{\lambda^2}{4} \le H_3(1)(f^{-1}) \le \lambda^3$.
\end{itemize}
\end{thm}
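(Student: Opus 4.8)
The plan is to follow the proof of the preceding theorem and push everything through the nonnegative coefficients $b_n$. First I would record the fifth inverse coefficient: applying the reversion formula $A_5=14a_2^4-21a_2^2a_3+6a_2a_4+3a_3^2-a_5$ and substituting \eqref{e9} gives, alongside \eqref{e12},
\[
A_5=b_4+2b_2^2+4b_1b_3+6b_1^2b_2+b_1^4 .
\]
Inserting $A_2,\dots,A_5$ into the definitions of $H_2(2)$ and $H_3(1)$ and simplifying, I expect the clean forms
\[
H_2(2)(f^{-1})=b_1b_3+b_1^2b_2-b_2^2,\qquad
H_3(1)(f^{-1})=b_2^3+b_2b_4-b_3^2 ,
\]
the second being the direct-function expression \eqref{e14} augmented by an extra $b_2^3$. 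Checking that every $b_1$-dependent monomial cancels in $H_3(1)(f^{-1})$ is the computational core; once these two identities are in hand, the estimates reduce to an optimization over the $b_n$.

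The lower bounds are then immediate. Because all $b_n\ge0$, the terms $b_1b_3,b_1^2b_2$ and $b_2^3,b_2b_4$ are nonnegative, so
\[
H_2(2)(f^{-1})\ge-b_2^2\ge-\lambda^2,\qquad
H_3(1)(f^{-1})\ge-b_3^2\ge-\tfrac{\lambda^2}{4},
\]
where I use $b_2\le\lambda$ and $b_3\le\tfrac12(\lambda-b_2)\le\tfrac\lambda2$ from \eqref{e11}. Equality is realized by $f(z)=z/(1+\lambda z^2)$ (here $b_1=b_3=0$, $b_2=\lambda$) and by $f(z)=z/(1+\tfrac\lambda2 z^3)$ (here $b_2=b_4=0$, $b_3=\tfrac\lambda2$) respectively, both in $\U^+(\lambda)$ by Lemma~\ref{lem-1}.

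For the upper bounds I would maximize over the region cut out by $0\le b_1\le1+\lambda$ (Theorem~B) together with \eqref{e11}. In $H_2(2)(f^{-1})$ both $b_1$-terms carry nonnegative coefficients, so I push $b_1$ to $1+\lambda$ and $b_3$ to $\tfrac12(\lambda-b_2)$; the residual function of $b_2$ is increasing on $[0,\lambda]$, placing the extremum at $b_2=\lambda$, $b_3=0$, i.e. at $f_\lambda$ of \eqref{e5}, with value $(1+\lambda)^2\lambda-\lambda^2=\lambda(1+\lambda+\lambda^2)$. For $H_3(1)(f^{-1})$ I would discard the nonpositive $-b_3^2$ and maximize $b_2^3+b_2b_4$ under $b_2+3b_4\le\lambda$, $b_2,b_4\ge0$; taking $b_4=\tfrac13(\lambda-b_2)$ reduces the problem to the single cubic $b_2^3-\tfrac13b_2^2+\tfrac\lambda3 b_2$ on $[0,\lambda]$, whose value at $b_2=\lambda$ is $\lambda^3$, attained by $z/(1+\lambda z^2)$.

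The main obstacle is precisely this last maximization. Writing
\[
\lambda^3-\Bigl(b_2^3+\tfrac13 b_2(\lambda-b_2)\Bigr)=(\lambda-b_2)\Bigl(\lambda^2+\lambda b_2+b_2^2-\tfrac13 b_2\Bigr),
\]
the sign on $[0,\lambda]$ hinges on the quadratic factor, hence on the discriminant $\tfrac49-4\lambda$ of the derivative $3b_2^2-\tfrac23 b_2+\tfrac\lambda3$. For $\lambda$ not too small this derivative has no real root and the cubic is monotone, so the maximum sits at $b_2=\lambda$ and equals $\lambda^3$; the delicate range is small $\lambda$, where an interior critical point appears and must be weighed against the boundary value (and, if necessary, one would invoke the joint constraints among $b_1,b_2,b_3,b_4$ not yet used). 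I would concentrate the effort here, since the lower bounds and the $H_2(2)$ estimate are routine once the two closed forms are established.
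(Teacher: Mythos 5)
Your computations all check out and, where the theorem is actually true, your argument is essentially the paper's: the reversion formula gives $A_5=b_4+2b_2^2+4b_1b_3+6b_1^2b_2+b_1^4$, the closed forms $H_2(2)(f^{-1})=b_1b_3+b_1^2b_2-b_2^2$ and $H_3(1)(f^{-1})=b_2^3+b_2b_4-b_3^2$ agree with the paper (which gets the latter from the identity $H_3(1)(f^{-1})=H_3(1)(f)-(a_3-a_2^2)^3$ together with $a_3-a_2^2=-b_2$), and your lower bounds and extremal functions coincide with the paper's. For part (i) your maximization over the full product region $0\le b_1\le1+\lambda$, $b_2+2b_3\le\lambda$ is in fact more rigorous than the paper's argument, which maximizes in $b_1$ alone and then imposes the forced values $b_2=\lambda$, $b_3=0$ at the endpoint $b_1=1+\lambda$. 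Your extremal function $z/(1+\lambda z^2)$ for the upper bound in (ii) is also the correct one; the paper's stated extremal $z/(1+\lambda z^3)$ is an error, as that function has $2b_3=2\lambda>\lambda$ and is not even in $\U^+(\lambda)$.

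The obstacle you isolate at the end, however, is not a loose end that more work could tie up: it is a genuine failure of the statement, and the "joint constraints not yet used" do not exist — by Lemma \ref{lem-1} the feasible set really is $b_n\ge0$, $b_2+2b_3+3b_4\le\lambda$, $b_1\le1+\lambda$. Taking $b_3=0$ (which only helps) and $b_4=\tfrac13(\lambda-b_2)$, your factorization gives
\[
\lambda^3-\Bigl(b_2^3+\tfrac13b_2(\lambda-b_2)\Bigr)=(\lambda-b_2)\Bigl(b_2^2+\bigl(\lambda-\tfrac13\bigr)b_2+\lambda^2\Bigr),
\]
and the quadratic factor evaluated at $b_2=\lambda$ equals $\lambda\bigl(3\lambda-\tfrac13\bigr)$, which is negative for $\lambda<\tfrac19$. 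Hence for every $0<\lambda<1/9$ there are feasible $b_2<\lambda$ with value exceeding $\lambda^3$, and the claimed upper bound in (ii) is false. Concretely, $f(z)=z/\bigl(1+\tfrac{\lambda}{2}z^2+\tfrac{\lambda}{6}z^4\bigr)$ — the paper's own extremal function for $H_3(1)(f)$ in the preceding theorem — lies in $\U^+(\lambda)$ and yields $H_3(1)(f^{-1})=\tfrac{\lambda^2}{12}+\tfrac{\lambda^3}{8}$, which is larger than $\lambda^3$ whenever $\lambda<2/21$. The flaw is in the paper's proof: it argues that $b_2b_4-b_3^2+b_2^3$ is increasing in $b_2$, places the maximum at $b_2=\lambda$, and only then invokes Lemma \ref{lem-1} to force $b_3=b_4=0$, ignoring that lowering $b_2$ frees up $b_4$ and that for small $\lambda$ the gain in $b_2b_4$ dominates the loss in $b_2^3$. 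Your weighing of the interior critical point against the boundary is exactly the right analysis: the bound $H_3(1)(f^{-1})\le\lambda^3$ holds precisely for $\tfrac19\le\lambda\le1$ (in particular for $\lambda=1$, i.e.\ for $\es^+$), while for $\lambda<\tfrac19$ the sharp bound is the interior maximum of your cubic, not $\lambda^3$.
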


\begin{proof}$ $
\begin{itemize}
  \item[($i$)] From \eqref{e12} we have
  \[ H_2(2)(f^{-1}) = A_2A_4-A_3^2 = b_1b_3+b_1^2b_2-b_2^2 \equiv \psi_1(b_1), \]
  where $\psi_1$ is an increasing function of $b_1$, $0\le b_1\le1+\lambda$, and $\psi_1(b_1)\le\psi(1+\lambda)$. But, Lemma \ref{lem-1}, implies $b_2=\lambda$ and $b_3=0$ when for $b_1=1+\lambda$. So, $\psi_1(b_1)\le\psi(1+\lambda)=(1+\lambda)\cdot0+(1+\lambda)^2\lambda-\lambda^2=\lambda(1+\lambda+\lambda^2)$, and the result follows.
      This upper bound is sharp due to the function $f_\lambda$ defined in \eqref{e5}.

    \medskip

    Also, $H_2(2)(f^{-1})  \ge -b_2^2 \ge -\lambda^2$. This bound is also sharp as the function $f(z)=\frac{z}{1+\lambda z^2}$ shows.

  \medskip
  \item[($ii$)] For the third Hankel determinant of $f^{-1}$ we have
  \[ H_3(1)(f^{-1}) = H_3(1)(f) -(a_3-a_2^2)^3 = b_2b_4-b_3^2+b_2^3,    \]
   which is an increasing function of $b_2$ on the interval $(0,\lambda)$, reaching the maximum value on the interval for $b_2=\lambda$. At the same moment, due to Lemma \ref{lem-1} we receive that $b_3=b_4=0$, leading to
   \[ H_3(1)(f^{-1}) \le \lambda\cdot 0 - 0^2+\lambda^3 = \lambda^3.\]
   The function $\frac{z}{1+\lambda z^3}$ shows that this estimate is the best possible.

   \medskip

   Also, $H_3(1)(f^{-1}) \ge -b_3^2 \ge -\frac{\lambda^2}{4}$ since from Lemma \ref{lem-1}, under the condition  $b_n=0$ for all positive integers $n\neq 3$, we receive $b_3\le\frac{\lambda}{2}$. This estimate is also the best possible with an extremal function $\frac{z}{1+\lambda/2 z^3}$.
\end{itemize}
\end{proof}

\medskip

\section{Zalcman conjecture}

In the early 1970's Zalcman posed the following conjecture for the class of univalent functions:
\[ |a_n^2-a_{2n-1}|\le (n-1)^2 \quad\quad (n\in {\mathbb N}, n\ge2).\]
There is a manuscript by Krushkal (\cite{krus})  that uses complex geometry of the universal Teichm\"{u}ller space claiming to have proven the conjecture, but this work is not widely reckognized to be correct.
Later, in 1999, Ma (\cite{ma}) proposed a generalized Zalcman conjecture,
\[ |a_m a_n-a_{m+n-1}|\le (m-1)(n-1) \quad\quad (m,n\in {\mathbb N}, m\ge2, n\ge2),\]
and closed it  for the class of starlike functions and for the class of univalent functions with real coefficients. The general case is still an open problem.

\medskip

The Zalcman conjecture for the class ${\mathcal{U}}$ when $n=2$ and $n=3$ was proven in \cite{MONT-NSJOM}.

\medskip

\noindent
{\bf Theorem C.}
{\it
Let $f\in{\mathcal{U}}$ be of the form \eqref{e1}. Then
\begin{itemize}
  \item[$(i)$] $|a_2a_3-a_4|\le2$;
  \item[$(ii)$] $|a_2a_4-a_5|\le3$.
\end{itemize}
These inequalities are sharp with equality for the Koebe function $k(z)=\frac{z}{(1-z)^2}=z+\sum_{n=2}nz^n$ and its rotations.
}

\medskip

We now give direct proof of the Zalcman conjecture for the class ${\mathcal{U}^+(\lambda)}$ for the cases when  $n=2$ and $n=3$.

\medskip

\begin{thm}
Let $f\in\mathcal{U}^+(\lambda)$, $0<\lambda\le1$, be of the form \eqref{e1}. Then
\begin{itemize}
  \item[$(i)$] $-(1+\lambda)\lambda \le a_2a_3-a_4\le \frac12 \lambda$;
  \item[$(ii)$] $|a_2a_4-a_5|\le \lambda+\lambda^2+\lambda^3$.
\end{itemize}
These inequalities are sharp.
\end{thm}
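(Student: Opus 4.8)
The plan is to express both Zalcman quantities directly in terms of the coefficients $b_n$ of $z/f(z)$ via \eqref{e9}, and then to optimize under the linear constraint $b_2+2b_3+3b_4+\cdots\le\lambda$ coming from Lemma \ref{lem-1}, together with $0\le b_1\le1+\lambda$ and $b_n\ge0$. A direct substitution gives
\[ a_2a_3-a_4 = b_3-b_1b_2, \qquad a_2a_4-a_5 = b_4-b_2^2+b_1^2b_2-b_1b_3, \]
so everything reduces to bounding these two polynomials in the $b_n$.

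For part $(i)$ both bounds are immediate. Since $b_1b_2\ge0$ and $b_3\le\tfrac12(\lambda-b_2)\le\tfrac{\lambda}{2}$ by \eqref{e11}, the upper estimate $a_2a_3-a_4\le b_3\le\tfrac{\lambda}{2}$ follows; dropping the nonnegative term $b_3$ and using $b_1\le1+\lambda$, $b_2\le\lambda$ gives $a_2a_3-a_4\ge-b_1b_2\ge-(1+\lambda)\lambda$. The upper bound is attained by the function with $z/f(z)=1+\tfrac{\lambda}{2}z^3$, and the lower one by the extremal function $f_\lambda$ of \eqref{e5}.

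Part $(ii)$ is the delicate step and I expect it to be the main obstacle. A crude term-by-term estimate—bounding $b_4\le\tfrac13(\lambda-b_2)$ and $b_1^2b_2\le(1+\lambda)^2b_2$ while \emph{discarding} the term $-b_2^2$—overshoots to $\lambda+2\lambda^2+\lambda^3$, which exceeds the claimed bound. The point is that the subtracted square $-b_2^2$ must be retained. Keeping it and dropping only the manifestly nonpositive $-b_1b_3$, one arrives at
\[ a_2a_4-a_5 \le \frac{\lambda}{3}+\left[(1+\lambda)^2-\frac13\right]b_2-b_2^2, \]
a downward parabola in $b_2$ whose derivative at $b_2=\lambda$ equals $\tfrac23+\lambda^2>0$; hence it is increasing on $[0,\lambda]$ and is maximized at $b_2=\lambda$, where it equals $\lambda(1+\lambda+\lambda^2)$. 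Note that this endpoint is exactly where Lemma \ref{lem-1} forces $b_3=b_4=0$, so the bound wastes nothing there.

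For the reverse inequality one uses $a_2a_4-a_5\ge-b_2^2-b_1b_3\ge-\lambda^2-\tfrac12\lambda(1+\lambda)$ and checks the elementary fact that $\lambda^2+\tfrac12\lambda(1+\lambda)\le\lambda+\lambda^2+\lambda^3$, which is equivalent to $\tfrac{\lambda}{2}(1-\lambda)+\lambda^3\ge0$ and therefore holds for $0<\lambda\le1$. Combining the two sides yields $|a_2a_4-a_5|\le\lambda+\lambda^2+\lambda^3$. Sharpness again follows from $f_\lambda$, for which $b_1=1+\lambda$, $b_2=\lambda$, $b_3=b_4=0$ give $a_2a_4-a_5=(1+\lambda)^2\lambda-\lambda^2=\lambda+\lambda^2+\lambda^3$, attaining the upper bound.
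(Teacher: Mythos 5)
Your proposal is correct, and it shares the paper's overall framework: rewrite both Zalcman expressions in the coefficients $b_n$ of $z/f(z)$ via \eqref{e9}, optimize under the constraint $\sum_{n\ge2}(n-1)b_n\le\lambda$ of Lemma \ref{lem-1} together with $0\le b_1\le1+\lambda$, and exhibit sharpness with $f_\lambda$ and $z/f(z)=1+\frac{\lambda}{2}z^3$. Part $(i)$ coincides with the paper's argument (incidentally, the paper's display there contains a typo, ``$b_1b_2-b_3\le b_3\le(1+\lambda)\lambda$''; your chain $b_1b_2-b_3\le b_1b_2\le(1+\lambda)\lambda$ is what is meant). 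In part $(ii)$, however, your handling of the upper bound is genuinely different and in fact more complete than the paper's. The paper sets $a_2a_4-a_5=b_1^2b_2-b_1b_3-b_2^2+b_4\equiv\psi_2(b_1)$ and simply asserts $\max\psi_2=\psi_2(1+\lambda)$, evaluating the right-hand side with the endpoint rigidity of Theorem B (that $b_1=1+\lambda$ forces $b_2=\lambda$, $b_3=b_4=\cdots=0$); as written this is incomplete, since $\psi_2$ also depends on $b_2,b_3,b_4$, which are tied to $b_1$ only at that endpoint, so larger values at interior $b_1$ still have to be excluded (note $\psi_2$ need not even be monotone in $b_1$, as $\partial\psi_2/\partial b_1=2b_1b_2-b_3$ can be negative). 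Your argument supplies exactly the missing justification: bounding $b_1^2b_2\le(1+\lambda)^2b_2$, discarding $-b_1b_3\le0$, using $b_4\le\frac13(\lambda-b_2)$, and maximizing the concave quadratic in $b_2$ over $[0,\lambda]$ (increasing there because its derivative at $b_2=\lambda$ equals $\frac23+\lambda^2>0$) yields $\lambda(1+\lambda+\lambda^2)$ uniformly over all admissible $b_n$, with no appeal to endpoint rigidity. Your preliminary observation that discarding $-b_2^2$ overshoots to $\lambda+2\lambda^2+\lambda^3$ correctly pinpoints why the quadratic term must be retained. The lower bound in $(ii)$ and all sharpness checks agree with the paper; in short, your route buys full rigor at the one step where the paper is loose, at the cost of a slightly longer computation.
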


\medskip

\begin{proof}$ $
\begin{itemize}
  \item[($i$)] Using \eqref{e9} we have
  \[ a_2a_3-a_4 = -b_1b_2+b_3 \le b_3 \le\frac12\lambda, \]
  (where we used Lemma \ref{lem-1} or \eqref{e11}).   Also,
  \[ -(a_2a_3-a_4) = b_1b_2-b_3 \le b_3 \le(1+\lambda)\lambda, \]
  (see Remark \ref{rem-2}), i.e., $a_2a_3-a_4\le(1+\lambda)\lambda$.
  The functions $f_\lambda$ and $f(z)=\frac{z}{1+\frac{\lambda}{2}z^3}$ show that the result is sharp.

  \medskip

  \item[($ii$)] Using \eqref{e9} we obtain that
  \[ a_2a_4-a_5 = b_1^2b_2-b_1b_3-b_2^2+b_4 \equiv \psi_2(b_1),\]
  $0\le b_1\le1+\lambda$. Since, $\max\psi_2=\psi_2(1+\lambda) = (1+\lambda)^2\lambda-\lambda^2 = \lambda+\lambda^2+\lambda^3$, then
  \[a_2a_4-a_5 \le \lambda+\lambda^2+\lambda^3.\]
   On the other hand,
  \[a_2a_4-a_5 \ge -b_1b_3-b_2^2 \ge   -(1+\lambda)\frac{\lambda}{2}-\lambda^2 > -(\lambda+\lambda^2+\lambda^3).\]
    The result is sharp due to the function $f_\lambda$.
\end{itemize}
\end{proof}

\medskip

\end{document}